\newtheorem{theorem}{Theorem}[section]
\newtheorem*{theorem*}{Theorem}
\newtheorem{proposition}[theorem]{Proposition}
\newtheorem{corollary}[theorem]{Corollary}
\newtheorem{lemma}[theorem]{Lemma}
\theoremstyle{definition}
\newtheorem{definition}[theorem]{Definition}
\newtheorem{remark}[theorem]{Remark}
\begin{document}
\baselineskip=.65cm

\title[s-numbers  sequences for homogeneous polynomials]{s-numbers sequences for homogeneous polynomials}

\author[E. \c Cal\i \c skan]{Erhan \c Cal\i \c skan}
\address[E. \c Cal\i \c skan]{Y\i ld\i z Technical University, Faculty of Sciences and Arts,
Department of Mathematics,
Davutpa\c sa, 34210 Esenler, \.Istanbul, Turkey} \email{ercalis@yahoo.com.tr}

\author[P. Rueda]{Pilar Rueda}
\address[P. Rueda]{Departamento de An\'{a}lisis Matem\'{a}tico,
Universidad de Valencia, Doctor Moliner 50, 46100 Burjasot (Valencia), Spain}
\email{pilar.rueda@uv.es}

\thanks{The second author was supported by MICINN  MTM2011-22417.}

\subjclass[2010]{Primary: 46G20; Secondary 46B28, 46G25}

\keywords{Banach spaces, homogeneous polynomials, $s$-numbers, approximation, Kolmogorov, Gelfand numbers, the measure of of non-compactness.}

\date{\today}

\begin{abstract}We extend the well known theory of $s$-numbers of linear operators to homogeneous polynomials defined between Banach spaces.
  Approximation, Kolmogorov and Gelfand numbers of polynomials are introduced and some well-known results of the linear and multilinear settings are obtained for homogeneous polynomials.
\end{abstract}

\maketitle

\section{Introduction}

A. Pietsch \cite{AP}  introduced $s$-numbers as  a tool for the study of linear operators  between Banach spaces. The success of the linear operator theory gave rise to consider a  multilinear and polynomial analogue that was proposed firstly by A. Pietsch and followed by many researchers in the last decades (see \cite{BC} and the references therein). The theory of $s$-numbers of multilinear operators among Banach spaces has been recently developed by D. L. Fernandez, M. Masty\-lo and E. B. da Silva \cite{DLFMMEBS}. While the properties of $s$-numbers of linear operators are well-known, the analogous theory  of homogeneous polynomials has not
been checked as far as it should have been. We mention only the particular case by
A. Brauns, H. Junek and E. Plewnia  \cite{BrJuPl} and the unpublished  \cite{ABHJ}.

The aim of this paper is to elaborate the corresponding theory of $s$-numbers in the context of homogeneous polynomials. It is worth mentioning that in many situations  dealing with polynomials instead of multilinear mappings has proved to be a subtle subject that has needed different approaches and that has yielded to different results. For instance, when trying to generalize absolutely summing operators to a non linear context, different approaches have been required, and whereas factorization theorems have been stated in the multilinear case, the search for a factorization scheme for dominated polynomials has turned out to be difficult and only partial results have been obtained (see \cite{BoPeRuJFA, BoPeRuLAMA2012, BoPeRuLAMA2014}. The main purpose of the present paper is to undertake a study of the basics on $s$-number sequences for polynomials, that include: approximation numbers $\tilde a_n$ and their relation with the adjoint and the biadjoint of a homogeneous polynomial introduced by  Aron and Schottenloher; Kuratowski and Hausdorff measures  of non-compactness of homogeneous polynomials; Kolmogorov numbers $\tilde d_n$ and their relation with the approximation numbers; Gelfand numbers $\tilde c_n$, the equivalence between $P$ being compact and $\lim_{n\to \infty}\tilde c_n(P)=0$ and their relation with Kolmogorov numbers. Some of the proofs we present are inspired by the linear/multilinear ones, whereas other use techniques from polynomial theory. Our aim is to present the theory from the point of view of polynomials and relate it to the linear or multilinear case with linearization techniques that will provide shorter proofs than coming from the classical theory. Furthermore, the results obtained for homogeneous polynomials can be considered extensions of the linear ones.

Section 2 is devoted to fix  notation and state some basic definitions and preliminary results. In a quite natural way, we introduce the notion of $m-s$-number sequence for $m$-homogeneous polynomials and relate it with the classical $s$-number sequences of linear operators.
 Section 3 contains the essentials of the $n$-th approximation number $\tilde a_n(P)$ of an homogeneous polynomial $P$ and its coincidence with the $n$-th approximation number of the adjoint $P^*$  of  $P$ in the sense of \cite{RMAMS}.
Compactness of homogeneous polynomials is treated in Section 4 by means of Kuratowski and Hausdorff measures $\gamma$ and $\tilde \gamma$ respectively.  We prove that $\gamma(P)\leq \tilde \gamma(P^*)$ and $\gamma(P^*)\leq \tilde \gamma(P)$ among other inequalities. As an application, we recover the well-known result that a homogeneous polynomial is compact if and only if its adjoint is compact. As an attempt to quantify the non compactness character of a polynomial, we study the polynomial notion of Kolmogorov numbers $\tilde d_n$ and the polynomial $m$-lifting property. In particular we prove that $\tilde d_n(P)=\tilde d_n(P_L)$, where $P_L$ is the linearization of an $m$-homogeneous polynomial $P$ defined on a Banach space $X$. Moreover, $\tilde d_n(P)=\tilde a_n(PQ)$, where $Q$ is the canonical metric surjection from $\ell_1(\overline B_X)$ onto $X$ defined by
${\hspace{0cm}}Q(\{\lambda_x\})=\sum_{x\in\overline{B}_X}\lambda_xx$, $\{\lambda_x\}\in l_1(\overline{B}_X)$.
 Finally, we deal with Gelfand's numbers $\tilde c_n$ adapted to the polynomial context. We obtain characterizations of compactness of homogeneous polynomials, this time, in terms of Gelfand numbers, and we prove that $\tilde c_n(P^*)\leq \tilde d_n(P)$, $\tilde c_n(P)=\tilde d_n(P^*)$ and $\tilde c_n(P)\leq 2\sqrt{n}\tilde c_n(P^*)$.

\section{Notation and preliminaries}

The symbol $\mathbb{K}$ represents the field of all real numbers or complex numbers,
${\mathbb{N}}$ represents the set of all positive integers.

The letters $X$, $Y$ and $Z$ will always represent (real or complex) Banach spaces.
The symbol
$B_X$ represents the open unit ball of $X$ and $\overline{B}_X$
 the closed unit ball. We denote by $X^*$ the dual Banach space of $X$, and by $\kappa_X$ the canonical embedding of $X$ into the bidual $X^{**}$ of $X$.

Given a subset $C\subset X$, let $\overline \Gamma (C)$ denote the closed balanced convex hull of $C$.

Let $1\leq p\leq\infty$, with the conjugate index $p'$ given by $\displaystyle \frac{1}{p}+\frac{1}{p'}=1$ (where $p'=1$ if $p=\infty$), let $\ell_p(X)$ ($1\leq p<\infty$) (resp., $\ell_{\infty}(X)$) denote the set of all sequences $(x_n)_{n=1}^\infty$ in $X$ such that $\displaystyle \sum_{n=1}^{\infty}\| x_n\|^{p}<\infty$ (resp., $(x_n)_n$ is bounded), and let $c_0(X)$ denote  the set of all sequences $(x_n)_{n=1}^\infty$ in $X$ such that $x_n\longrightarrow0$ in $X$.

Given a continuous $m$-linear mapping $A:X \times \cdots\times X \to Y$, the map
$P:X\longrightarrow Y$, defined by
$P(x)=A(\underbrace{x,\ldots,x}_{m\ times})$ for every $x\in X$, is
said to be a continuous $m$-homogeneous polynomial.
${\mathcal{P}}(^mX;Y)$ will denote the vector space of all
continuous $m$-homogeneous polynomials from $X$ into $Y$, which is a
Banach space with norm $\| P\|=\sup\{\|
P(x)\| : \
\| x\|\leq 1\}$. When $Y=\mathbb{K}$ we will write ${\mathcal{P}}(^mX)$
instead of ${\mathcal{P}}(^mX;\mathbb{K})$ and when $m=1$, ${\mathcal L}(X;Y):={\mathcal P}(^1X;Y)$ is the space of all continuous linear operators from $X$ to $Y$.  Let ${\mathcal P}^m:=\bigcup_{X,Y} {\mathcal P}(^mX;Y)$, that is, ${\mathcal P}^m$ is the class of all $m$-homogeneous polynomials defined between Banach spaces. Denote by ${\mathcal P}:=\bigcup_m{\mathcal P}^m$ the class of all continuous homogeneous  polynomials defined between Banach spaces.

Let $P\in\mathcal{P}(^mX;Y)$. We define the rank of $P$ as the dimension of the linear span of $P(X)$ in $Y$: $rank(P)=dim([P(X)])$.


In a natural way, we introduce the notion of an $m-s$-number sequence for $m$-homogeneous continuous polynomials.
Let $m\in {\mathbb{N}}$ and for each $n\in \mathbb N$ let $s_n: {\mathcal{P}}^m\longrightarrow [0,\infty)$ be a mapping. The sequence $s=(s_n)$  is called an {\it $m-s$-number sequence} if the following conditions are satisfied for any $n,k\in \mathbb N$:

$(S1)$ Monotonicity:  For every $P\in{\mathcal{P}}(^mX;Y)$,

${\hspace{3.5cm}}\| P\|=s_1(P)\geq s_2(P)\geq\ldots\geq0$.

$(S2)$ Additivity:  For every $P,Q\in{\mathcal{P}}(^mX;Y)$,

${\hspace{3.5cm}}s_{k+n-1}(P+Q)\leq s_k(P)+s_n(Q)$.

$(S3)$ Ideal-property:  For every $P\in{\mathcal{P}}(^mX;Y)$, $S\in{{\mathcal L}}(Y;Z)$, $T\in{{\mathcal L}}(W;X)$

${\hspace{3.5cm}}s_n(SPT)\leq \| S\| s_n(P)\| T\|^m$.

$(S4)$ Rank-property:  Let $P\in{\mathcal{P}}(^mX;Y)$.

${\hspace{3.5cm}} rank(P)<n \Longrightarrow s_n(P)=0$.

\noindent
Furthermore, if $m=1$ the following condition has to be added:

$(S5)$ Norming-property: $s_n({\rm Id}:\ell_2^n\to \ell_2^n)=1$, $n\in \mathbb N$, where Id is the identity mapping on the $n$-dimensional Hilbert space $\ell_2^n$.

If $(s_n)$ is an $m-s$-number sequence for each $m\in\mathbb{N}$, then $(s_n)$ is called an {\it $s$-number sequence.} Note that this notion coincides with the usual notion of $s$-number sequence for linear operators whenever $m=1$.

Given $P\in {\mathcal P}(^mX;Y)$, let $P_{L}$ denote the linearization of $P$; that is the unique continuous linear operator $P_L:\hat\otimes_{m,s}^{\pi_s}X\to Y$ such that $P(x)=P_L(\otimes_m x)$. The correspondence $P\leftrightarrow P_L$ determines an isometric isomorphism ---denoted by $I_{^mX,Y}$---  between ${\mathcal P}(^mX;Y)$ and the space  $L(\otimes_{m,s}^{\pi_s}X;Y)$ of all continuous linear operators from $\otimes_{m,s}^{\pi_s}X$ to $Y$.  Let $I_m:{\mathcal P}^m\longrightarrow {\mathcal L}$ and $I:{\mathcal P}\to {\mathcal L}$  be the correspondences whose restrictions to each component ${\mathcal P}(^mX;Y)$ is equal to $I_{^mX,Y}$.

If $T\in {\mathcal L}(X;Y)$, let $\otimes_{m,s}T:\otimes_{m,s}^{\pi_s}X\to \otimes_{m,s}^{\pi_s}Y$ be the continuous linear map given by $\otimes_{m,s}T(\otimes_m x)=\otimes_mT(x)$.

Our first interest is to relate the linear and the polynomial notions of $s$-number sequences.
\begin{proposition}\label{I}
If the mapping $s=(s_n):{\mathcal  L}\longrightarrow [0,\infty)^{{\mathbb{N}}}$ is an $s$-number sequence (in the linear sense) then, $s\circ I_m: {\mathcal{P}}^m\longrightarrow [0,\infty)^{{\mathbb{N}}}$ is an  $m-s$-number sequence.
\end{proposition}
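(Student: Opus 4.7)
The plan is to verify that $(s \circ I_m)_n$ satisfies each of the axioms (S1)--(S4) of an $m$-$s$-number sequence by transferring the corresponding linear axioms across the isometric linearization $I_m$. The key identity underlying the whole argument is
\[ (SPT)_L = S \circ P_L \circ (\otimes_{m,s}T) \]
for $T \in \mathcal{L}(W;X)$, $P \in \mathcal{P}(^mX;Y)$ and $S \in \mathcal{L}(Y;Z)$; it holds by uniqueness of linearization, because both sides agree on every elementary tensor $\otimes_m w$.

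The axioms (S1), (S2) and (S4) are almost immediate. For (S1), since $I_m$ is an isometric isomorphism, $\|P\| = \|P_L\| = s_1(P_L)$, and the rest of the chain is just the linear (S1) applied to $P_L$. For (S2), linearization is a linear operation, so $(P+Q)_L = P_L + Q_L$, and the linear additivity of $s$ applied to these two operators gives exactly the required inequality. For (S4), the range of $P_L$ coincides with the linear span $[P(X)]$, so $\operatorname{rank}(P_L) = \operatorname{rank}(P)$; if this is strictly less than $n$, the linear (S4) forces $s_n(P_L) = 0$.

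The only slightly delicate point is (S3). From the key identity and the linear ideal property,
\[ s_n((SPT)_L) = s_n\bigl(S \circ P_L \circ \otimes_{m,s}T\bigr) \leq \|S\|\, s_n(P_L)\, \|\otimes_{m,s}T\|. \]
What remains is the norm computation $\|\otimes_{m,s}T\| = \|T\|^m$, which is the main (minor) obstacle. The cleanest route is to observe that $\otimes_{m,s}T$ is itself the linearization of the $m$-homogeneous polynomial $Q \colon W \to \hat\otimes^{\pi_s}_{m,s}X$ defined by $Q(w)=\otimes_m T(w)$; then, using the standard identity $\|\otimes_m x\|_{\pi_s} = \|x\|^m$, one obtains $\|\otimes_{m,s}T\| = \|Q\| = \sup_{\|w\|\leq 1}\|T(w)\|^m = \|T\|^m$. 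Once this is in hand, substitution yields (S3) with the correct exponent on $\|T\|$, and the proposition follows. In the special case $m=1$, since $\hat\otimes^{\pi_s}_{1,s}X = X$ and $I_1$ reduces to the identity, the norming axiom (S5) is also inherited automatically from the linear theory.
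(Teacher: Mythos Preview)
Your argument is correct and follows exactly the same route as the paper: both rely on the identity $(SPT)_L = S\circ P_L\circ(\otimes_{m,s}T)$ together with $\|\otimes_{m,s}T\|=\|T\|^m$ to handle the ideal property, which is the only nontrivial axiom. In fact the paper's proof is terser than yours---it states only these two facts and leaves (S1), (S2), (S4) implicit---so your write-up is a strict elaboration of the same idea.
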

\begin{proof} We will pay attention just to the ideal property.
This property follows from the fact that $(SPT)_L=SP_L\otimes_{m,s}T$ and $\|\otimes_{m,s}T\|=\|T\|^m$, for all
 $P\in\mathcal{P}(^mX;Y)$, $T\in{\mathcal L}(W;X)$ and $S\in{\mathcal L}(Y;Z)$.
\end{proof}

Injectivity, surjectivity and multiplicativity have been proved useful tools for $s$-number sequences. Let us extend these properties to  the polynomial context.

$(J)$ An $m-s$-number sequence $s=(s_n)$ is called {\it injective} if  given any metric
injection $j\in{\mathcal L}(Y;Z)$, i.e., $\| j(y)\|=\| y\|$ for all $y\in Y$, $s_n(P)=s_n(jP)$ for all $P\in{\mathcal{P}}(^mX;Y)$ and for all Banach spaces $X$.

$(S)$ An $m-s$-number sequence $s=(s_n)$ is called {\it surjective} if given any metric surjection $q\in{\mathcal L}(Z;X)$, i.e., $q(B_Z)=B_X$, $s_n(P)=s_n(Pq)$ for all $P\in{\mathcal{P}}(^mX;Y)$.

$(M)$ An $s$-number sequence $s=(s_n)$ is called multiplicative if, for $u\in{\mathcal L}(Y;Z)$ and $P\in{\mathcal{P}}(^mX;Y)$,

${\hspace{3.5cm}}s_{k+n-1}(u\circ P)\leq s_k(u)s_n(P)$, $k,n\in\mathbb{N}$.

\begin{proposition}\label{injsur}
Let   $s=(s_n):{\mathcal L}\longrightarrow [0,\infty)^{\mathbb N}$ be an $s$-number sequence for linear operators.
\begin{enumerate}
\item  If $s$ is injective then $s\circ I_m$ is injective.
\item If $s$ is surjective then, $s\circ I_m$ is surjective.
\item If $s$ is multiplicative then $s\circ I_m$ is multiplicative.
\end{enumerate}
\end{proposition}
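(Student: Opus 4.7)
The plan is to reduce each of the three claims to the corresponding linear property of $s$ through the isometric linearization $I_m$, exploiting the identity $(SPT)_L = S\circ P_L\circ\otimes_{m,s}T$ used in the proof of Proposition \ref{I}. Under this identity a linear postcomposition with $P$ at the polynomial level translates to the same postcomposition with $P_L$ at the tensor level, while a linear precomposition by $q\in\mathcal{L}(Z;X)$ becomes a precomposition by $\otimes_{m,s}q$, not by $q$ itself. This last asymmetry is where the only nontrivial work is hidden.

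For (1), if $j\in\mathcal{L}(Y;Z)$ is a metric injection and $P\in\mathcal{P}(^mX;Y)$, then $I_m(jP)=j\circ P_L$; linear injectivity of $s$ applied to $j$ gives $s_n(j\circ P_L)=s_n(P_L)$, which is exactly polynomial injectivity of $s\circ I_m$. For (3), given $u\in\mathcal{L}(Y;Z)$, we have $I_m(uP)=u\circ P_L$, and linear multiplicativity of $s$ immediately yields $s_{k+n-1}(u\circ P_L)\leq s_k(u)s_n(P_L)$, which is the desired polynomial multiplicativity. These two cases are purely formal, once the linearization identity is in place.

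The main obstacle is (2). Here $I_m(Pq)=P_L\circ\otimes_{m,s}q$, so reducing to linear surjectivity of $s$ hinges on verifying that the induced map $\otimes_{m,s}q:\hat\otimes_{m,s}^{\pi_s}Z\to\hat\otimes_{m,s}^{\pi_s}X$ is itself a metric surjection. To establish this I would take an arbitrary $u\in\hat\otimes_{m,s}^{\pi_s}X$ with $\|u\|_{\pi_s}<1$, represent it as a convergent series $u=\sum_i\lambda_i\otimes_m x_i$ with $\sum_i|\lambda_i|\,\|x_i\|^m<1$ (a standard consequence of the definition of $\pi_s$ on the completion), lift each $x_i$ to some $z_i\in Z$ with $q(z_i)=x_i$ and $\|z_i\|^m\leq\|x_i\|^m+\delta_i$, choosing the $\delta_i$ small enough that $\sum_i|\lambda_i|\,\|z_i\|^m<1+\varepsilon$, and set $v:=\sum_i\lambda_i\otimes_m z_i$; then $v\in\hat\otimes_{m,s}^{\pi_s}Z$ with $\|v\|_{\pi_s}<1+\varepsilon$ and $(\otimes_{m,s}q)(v)=u$. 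Letting $\varepsilon\to 0$ shows that the open unit ball of $\hat\otimes_{m,s}^{\pi_s}X$ is contained in the image of the open unit ball of $\hat\otimes_{m,s}^{\pi_s}Z$, i.e.\ that $\otimes_{m,s}q$ is a metric surjection. Linear surjectivity of $s$ then gives $s_n(P_L\circ\otimes_{m,s}q)=s_n(P_L)$, which is the polynomial surjectivity of $s\circ I_m$.
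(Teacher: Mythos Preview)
Your proof is correct and follows the same overall strategy as the paper: reduce each statement to the corresponding linear property via the linearization $I_m$. Parts (1) and (3) are argued identically to the paper's proof. For part (2), both you and the paper establish the key fact that $\otimes_{m,s}q$ is a metric surjection and then invoke linear surjectivity of $s$; the difference lies only in how that fact is proved. The paper argues structurally, using that the open unit ball of $\hat\otimes_{m,s}^{\pi_s}X$ is the balanced convex hull $\Gamma(\otimes_m B_X)$, so that
\[
\otimes_{m,s} q\big(B_{\hat\otimes_{m,s}^{\pi_s}Z}\big)=\otimes_{m,s} q\big(\Gamma(\otimes_m B_Z)\big)=\Gamma\big(\otimes_m q(B_Z)\big)=\Gamma(\otimes_m B_X)=B_{\hat\otimes_{m,s}^{\pi_s}X}
\]
in one line. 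Your series-lifting argument is more hands-on and self-contained, not relying on that description of the unit ball; it has the advantage of working transparently on the completion. One small remark: the phrase ``letting $\varepsilon\to 0$'' is imprecise, since you need a single preimage $v$ lying in the open unit ball; but since you already start from a representation with $\sum_i|\lambda_i|\,\|x_i\|^m<1$, you can simply choose the $\delta_i$ so that $\sum_i|\lambda_i|\,\|z_i\|^m<1$ directly, and no limiting step is needed.
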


\begin{proof} (1)  Let  $j\in{\mathcal L}(Y;Z)$ be a metric injection and $P\in {\mathcal P}(^mX;Y)$. Then,

${\hspace{1cm}}s_n\circ I_m(P)=s_n(P_L)=s_n(j\circ P_L)=s_n((j\circ P)_L)=s_n\circ I_m(j\circ P).$

\noindent (2) Let $q\in{\mathcal L}(Z;X)$ be a metric surjection and $P\in{\mathcal{P}}(^mX;Y)$. If $\otimes q:\otimes_{m,s}^{\pi_s}Z\longrightarrow \otimes_{m,s}^{\pi_s}X$ denotes the linear map given by $\otimes q(\otimes x):=\otimes q(x)$, then

${\hspace{1cm}}\otimes q( B_{\otimes_{m,s}^{\pi_s}Z})=\otimes q(\Gamma (\otimes_{m,s}^{\pi_s}B_Z))= \Gamma(\otimes_{m,s}^{\pi_s}q(B_Z))=\Gamma(\otimes_{m,s}^{\pi_s}B_X)= B_{\otimes_{m,s}^{\pi_s}X}.$

\noindent Hence, for any $P\in {\mathcal P}(^mX;Y)$ and any metric surjection $q\in{\mathcal L}(Z;X)$ we have

${\hspace{2cm}}s_n\circ I_m(P\circ q)=s_n((P\circ q)_L)=s_n(P_L\circ \otimes q)=s_n( P_L)=s_n\circ I_m( P)$.

\noindent (3) Let $u\in{\mathcal L}(Y;Z)$ and $P\in{\mathcal{P}}(^mX;Y)$. Then,

${\hspace{1cm}}s_{k+n-1}\circ I_m(u\circ P)=s_{k+n-1}((u\circ P)_L)=s_{n+k-1}(u\circ P_L)\\
{\hspace{4.5cm}}\leq s_k(u) s_n(P_L)=s_k\circ I_m(u)s_n\circ I_m(P).$
\end{proof}

The theory of ideals of homogeneous polynomials between Banach spaces has been developed in the last decades by several authors, so the extension of the dual procedure to polynomial ideals is a natural step. In this paper we provide many results on homogeneous polynomials in connection with their adjoint concerning measure of non-compactness and $s$-numbers. First we need the definition of the adjoint of a continuous homogeneous polynomial.
\begin{definition}\rm (Aron--Schottenloher \cite{RMAMS}) Given a continuous $m$-homogeneous polynomial $P \in {\mathcal{P}}(^mX;Y)$ between the Banach spaces $X$ and $Y$, the adjoint of $P$ is the following continuous linear operator:

${\hspace{3.5cm}}P^* \colon Y^* \longrightarrow {\mathcal{P}}(^mX)~,~P^*(\varphi)(x) =\varphi(P(x)).$

\noindent It is clear that $\|P^*\| = \|P\|$.
\end{definition}

After this definition by R. Aron and M. Schottenloher, and after the works of R. Ryan \cite{RR2, RR1}, the adjoint of a polynomial became a standard tool in the study of spaces of homogeneous polynomials and in infinite dimensional holomorphy (see, e.g. \cite{SD, SDJM, JM2} and references therein).
We refer to \cite{SD} or
\cite{JM} for the properties of polynomials in infinite dimensional spaces, and to \cite{JLLT} for the
theory of Banach spaces.

\section{Approximation numbers of homogeneous polynomials}

Similar to the linear case we define the $n$-th approximation number $\tilde a_n(P)$ of any homogeneous polynomial $P\in\mathcal{P}(^mX;Y)$ by

${\hspace{2cm}}\tilde a_n(P):=inf\{\|P-Q\|: \ Q\in\mathcal{P}(^mX;Y), \ rank(Q)<n\}$.

\noindent If we denote $a_n(T):=\inf\{ \|T-L\|: \ L\in {\mathcal L}(X;Y), {\rm rank}(L)<n\}$, $T\in {\mathcal L}(X;Y)$, then  $\tilde a_n(P)=a_n(P_L)$.

If  $a=(a_n)$ is an $s$-number sequence on $\mathcal L$, Proposition \ref{I} gives that
 $\tilde a=a\circ I_m$ is an $m-s$-number sequence on ${\mathcal P}^m$. Therefore,  $\tilde a=a\circ I$  is an $s$-number sequence.


\begin{proposition}\label{p:090.11.0}  Let $(s_n):\mathcal{P}(^mX;Y)\longrightarrow [0,\infty)^{\mathbb{N}}$ be an $s$-number sequence. Then

(i) For all $P\in\mathcal{P}(^mX;Y)$ we have  $s_n(P)\leq \tilde a_n(P)$, $n\in\mathbb{N}$.

(ii) For all $S\in{\mathcal L}(Y;Z)$, $P\in\mathcal{P}(^mX;Y)$ and all $k,n\in\mathbb{N}$ we have $s_{k+n-1}(SP)\leq s_1(S)\tilde a_n(P)$ and $s_{k+n-1}(SP)\leq a_k(S)s_n(P)$.
\end{proposition}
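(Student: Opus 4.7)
The plan is to derive all three inequalities from the same template: approximate the target by a low-rank map, use additivity (S2) to split $s_{k+n-1}$ into two summands, kill one summand via the rank property (S4), and bound the remainder with the ideal property (S3) and monotonicity (S1). No axiom beyond (S1)–(S4) is required, so the argument works for any $m$-$s$-number sequence.

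For (i) I would fix any $Q\in\mathcal{P}(^mX;Y)$ with $\mathrm{rank}(Q)<n$ and write $P=(P-Q)+Q$. Applying (S2) with indices $1$ and $n$ gives $s_n(P)\le s_1(P-Q)+s_n(Q)$; by (S1) the first summand equals $\|P-Q\|$, and by (S4) the second vanishes. Taking the infimum over admissible $Q$ yields $s_n(P)\le\tilde a_n(P)$.

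For the first bound in (ii) I would perturb the polynomial factor. Given $Q$ with $\mathrm{rank}(Q)<n$, one has $\mathrm{rank}(SQ)\le\mathrm{rank}(Q)<n$, so splitting $SP=S(P-Q)+SQ$ and applying (S2) with indices $k$ and $n$ produces $s_{k+n-1}(SP)\le s_k(S(P-Q))+s_n(SQ)$. The last summand vanishes by (S4). For the first, (S3) applied with $T=\mathrm{id}_X$ followed by (S1) gives $s_k(S(P-Q))\le \|S\|\,s_k(P-Q)\le \|S\|\,\|P-Q\|$; taking the infimum over $Q$ and recalling $s_1(S)=\|S\|$ yields the claimed inequality.

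For the second bound in (ii) I would instead perturb the linear factor. Given $L\in\mathcal{L}(Y;Z)$ with $\mathrm{rank}(L)<k$, we have $\mathrm{rank}(LP)\le\mathrm{rank}(L)<k$, so splitting $SP=(S-L)P+LP$ and applying (S2) with indices $n$ and $k$ yields $s_{k+n-1}(SP)\le s_n((S-L)P)+s_k(LP)$. The second term is zero by (S4), and (S3) bounds the first by $\|S-L\|\,s_n(P)$; taking the infimum over $L$ yields $a_k(S)\,s_n(P)$. No genuine obstacle is expected: the only point of care is to choose the right split of the index $k+n-1$ in additivity (namely $1+n$, $k+n$, or $n+k$), and to observe that composition with a continuous map on either side never increases rank, which is precisely what makes (S4) applicable at each step.
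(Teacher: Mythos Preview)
Your proof is correct and follows essentially the same approach as the paper: in each case you split via additivity (S2), annihilate the low-rank piece with (S4), and bound the remainder with (S3) and (S1). The paper leaves the second inequality in (ii) to the reader (``can be obtained in a similar way''), whereas you spell it out by perturbing the linear factor $S$ instead of $P$; this is exactly the intended similar argument.
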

\begin{proof}  (i) Let $P\in\mathcal{P}(^mX;Y)$. Then for any $R\in\mathcal{P}(^mX;Y)$ with $rank(R)<n$, we have
${\hspace{0cm}}s_n(P)\leq s_1(P-R)+s_n(R)=\| P-R\|+s_n(R)=\| P-R\|$.
Hence, by definition of $\tilde a_n(P)$ we have $s_n(P)\leq \tilde a_n(P)$.

\noindent (ii) Let $R\in\mathcal{P}(^mX;Y)$ with $rank(R)<n$. Since $rank(SR)<n$, it follows that

${\hspace{1.5cm}}s_{k+n-1}(SP)\leq s_k(S(P-R))+s_n(SR)=s_k(S(P-R))$

${\hspace{3.5cm}}\leq\| S\| s_k(P-R)\| I_X\|^m\leq\| S\| s_1(P-R)=\| S\|\| P-R\|$.

\noindent Hence by definition of $(\tilde a_n)$ we get $s_{k+n-1}(SP)\leq s_1(S)\tilde a_n(P)$.

The proof of the second inequality can be obtained in a similar way.
\end{proof}


\begin{remark}\label{l:60.11.90} If $P\in\mathcal{P}(^mX;Y)$ has finite rank then $P_L$ has finite rank. Hence,
${\rm rank}(P)={\rm rank}( P_L)={\rm rank}((P_L)^*)={\rm rank}(I_{^mX,\mathbb K}\circ P^*)={\rm rank}(P^*)$.
\end{remark}

It is worth mentioning that our use of polynomial techniques allows us to reduce to the linear case many proofs instead of adapting all calculations to the new setting.

\begin{proposition}\label{p:07.07.09} Let $m\geq2$ and let $X$ and $Y$ be Banach spaces. For every polynomial $P\in \mathcal{P}(^mX;Y)$ we have
$a_n(P^*)\leq \tilde a_n(P)$, $n\in\mathbb{N}$. Furthermore, if there exists a linear projection $\pi$ of norm $1$ from $Y^{**}$ onto $\kappa_Y(Y)$ then, for every $P\in\mathcal{P}(^mX;Y)$ we have that $a_n(P^*)=\tilde a_n(P)$, $n\in\mathbb{N}$.
\end{proposition}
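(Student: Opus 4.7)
The plan is to reduce everything to the linear result by passing to the linearization $P_L$ and using the isometric isomorphism $I_{^mX,\mathbb{K}} : \mathcal{P}(^mX) \to (\hat\otimes_{m,s}^{\pi_s}X)^*$. The decisive observation will be that $P^*$ is essentially the linear Banach-space adjoint of $P_L$.

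First I would establish the identification
$$I_{^mX,\mathbb{K}} \circ P^* = (P_L)^* \colon Y^* \to (\hat\otimes_{m,s}^{\pi_s}X)^*.$$
For any $\varphi \in Y^*$ and $x \in X$, both functionals send $\otimes_m x$ to $\varphi(P(x)) = \varphi(P_L(\otimes_m x))$: indeed $I_{^mX,\mathbb{K}}(P^*(\varphi))$ is by definition the linearization of the scalar polynomial $P^*(\varphi)$, so it agrees with $(P_L)^*(\varphi)$ on the total set $\{\otimes_m x : x \in X\}$, and hence on all of $\hat\otimes_{m,s}^{\pi_s}X$ by linearity and continuity.

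Next, because $I_{^mX,\mathbb{K}}$ is an isometric isomorphism, the ideal property of approximation numbers (applied to both $I_{^mX,\mathbb{K}}$ and its inverse) gives $a_n(I_{^mX,\mathbb{K}} \circ P^*) = a_n(P^*)$. Combined with the identification above this yields
$$a_n(P^*) = a_n((P_L)^*).$$
Now I would invoke the classical linear inequality $a_n(T^*) \leq a_n(T)$ for any $T \in \mathcal{L}(X;Y)$ (see Pietsch), applied to $T = P_L$, to conclude
$$a_n(P^*) = a_n((P_L)^*) \leq a_n(P_L) = \tilde a_n(P),$$
which is the first assertion.

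For the equality statement under the existence of a norm-one projection $\pi : Y^{**} \to \kappa_Y(Y)$, I would appeal to the corresponding linear result of Pietsch: this hypothesis forces $a_n(T) = a_n(T^*)$ for every $T \in \mathcal{L}(X;Y)$. Applying this to $T = P_L$ gives $\tilde a_n(P) = a_n(P_L) = a_n((P_L)^*) = a_n(P^*)$. The main (small) obstacle is the bookkeeping for the identification $I_{^mX,\mathbb{K}} \circ P^* = (P_L)^*$; once this is clear, the argument is an immediate transfer of two standard linear facts via the linearization, which is very much in the spirit announced after Remark~\ref{l:60.11.90}.
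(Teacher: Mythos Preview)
Your proposal is correct and follows essentially the same route as the paper: both reduce to the linearization via the identity $I_{^mX,\mathbb{K}}\circ P^*=(P_L)^*$, use that $I_{^mX,\mathbb{K}}$ is an isometric isomorphism to pass from $a_n(P^*)$ to $a_n((P_L)^*)$, and then invoke the linear inequality $a_n(T^*)\le a_n(T)$ (Pietsch) for the first part and the linear equality under the norm-one projection hypothesis for the second. The only cosmetic differences are that you spell out the verification of $I_{^mX,\mathbb{K}}\circ P^*=(P_L)^*$ on elementary tensors, and for the equality you cite Pietsch directly while the paper cites \cite[Proposition~3.3]{DLFMMEBS}; the underlying linear fact is the same.
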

\begin{proof} Since $I_{^mX,{\mathbb K}}$ is an isometric isomorphism, it follows from, e.g., \cite[p. 152, 11.7.3. Proposition]{AP2} that

${\hspace{2cm}}a_n(P^*)=a_n(I_{^mX,{\mathbb K}}\circ P^*)=a_n((P_L)^*)\leq a_n(P_L)=\tilde a_n(P).$

For the second assertion, we use the analogous property for linear operators \cite[Proposition 3.3]{DLFMMEBS} to get that

${\hspace{2cm}}\tilde a_n(P)=a_n(P_L)=a_n(P_L^*)=a_n(I_{^mX,{\mathbb K}}\circ P^*)=a_n(P^*).$
\end{proof}

\begin{remark}\label{remark} The technique we have used in this section makes use of the linearization of continuous homogeneous polynomials. A similar technique works for continuous $m$-linear mappings. For each integer $m\in\mathbb{N}$, let ${\mathcal L}(X_1,\ldots,X_m;Y)$ be the
Banach space of all continuous $m$-linear mappings
$A:X_1\times\ldots\times X_m\longmapsto Y$, endowed with the sup
norm $\| A\|=\sup\{\|
A(x_1,\ldots,x_m)\| : \
\| x_i\|\leq 1,\ i=1,\ldots,m\}$.
 If $T\in {\mathcal L}(X_1,\ldots,X_m;Y)$ there is a unique continuous linear operator $T_L\in{\mathcal L}(X_1\hat \otimes_{\pi}\cdots \hat \otimes_{\pi} X_m;Y)$ such that $T_L(x_1\otimes\ldots\otimes x_m)=T(x_1,\ldots,x_m)$, and the correspondence $T\leftrightarrow T_L$ determines an isometric isomorphism between ${\mathcal L}(X_1,\ldots,X_m;Y)$ and ${\mathcal L}(X_1\hat \otimes_{\pi}\cdots \hat \otimes_{\pi} X_m;Y)$. This could yield to alternative proofs in \cite{DLFMMEBS} based in the well-known linear case.
\end{remark}

\section{Compactness of homogeneous polynomials}

The results in this section shows that the natural extensions of Kuratowski and Hausdoff measures to polynomials keeps the harmony between linear and non linear theory.

Let $\mathcal{X}$ be a  metric space. The { \it Kuratowski measure} $\alpha(A)$ of non-compactness of a bounded set $A\subset \mathcal X$ is defined by

$\alpha(A)=inf\{\varepsilon>0: \text{A may be covered by finitely many sets of diameter}\leq\varepsilon\}$.

\noindent In case that we consider just finitely many balls of radius $\leq  \varepsilon$ to cover $A$, the infimum is called the {\it Hausdorff ball measure} $\beta(A)$ of non-compactness of $A$, that is

$\beta(A)=inf\{\varepsilon>0: \text{A may be covered by finitely many balls of radius}\leq\varepsilon\}$.

\noindent For every bounded set $A$ we have that $\beta(A)\leq\alpha(A)\leq2\beta(A)$.

Let $X$ and $Y$ be Banach spaces. Since continuous $m$-homogeneous polynomials are bounded on bounded sets, we can extend the Kuratowski, and the Hausdorff measure of non-compactness of linear operators to polynomials in a natural way:  for any $P\in{\mathcal{P}}(^mX;Y)$  the {\it Kuratowski and the Hausdorff measure}, respectively, of non-compactness of $P$ is defined by

${\hspace{2cm}}\gamma(P):=\alpha(P(\overline{B}_X)) \ \ \mbox{ and }\ \ \  \widetilde{\gamma}(P):=\beta(P(\overline{B}_X))$

Note  that $P$ is compact if and only if $\widetilde \gamma (P)=\gamma(P)=0$.

\begin{remark} (see \cite[Theorem 2.9]{DEEWDE}) Suppose $X$ and $Y$ are Banach spaces and let $T\in{\mathcal L}(X;Y)$. Then $\gamma(T)\leq\widetilde{\gamma}(T^*)$ and $\gamma(T^*)\leq\widetilde{\gamma}(T)$.
\end{remark}

\begin{lemma}\label{epsilon}
Let $X$ be a Banach space. Then $\beta(C)=\beta(\overline \Gamma(C))$ for any  $C\subset X$. In particular, $\beta(\overline \Gamma(P(B_X)))=\tilde \gamma(P)$.
\end{lemma}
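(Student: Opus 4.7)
The plan is to prove the two inequalities $\beta(C) \leq \beta(\overline{\Gamma}(C))$ and $\beta(\overline{\Gamma}(C)) \leq \beta(C)$ separately. The first is immediate because $C \subseteq \overline{\Gamma}(C)$, so every finite cover of $\overline{\Gamma}(C)$ by balls of radius $\leq \varepsilon$ is automatically a finite cover of $C$; taking the infimum gives $\beta(C) \leq \beta(\overline{\Gamma}(C))$.

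For the nontrivial direction, I would fix $\varepsilon > \beta(C)$ and choose $x_1,\ldots,x_n \in X$ with $C \subseteq \bigcup_{i=1}^n (x_i + \varepsilon\overline{B}_X)$; equivalently, $C \subseteq F + \varepsilon \overline{B}_X$ where $F = \{x_1,\ldots,x_n\}$. Set $D := \overline{\Gamma}(F)$. Since $D$ is the image of the compact simplex-like set $\{(t_1,\ldots,t_n) \in \mathbb{K}^n : \sum_i |t_i| \leq 1\}$ under the continuous map $(t_i)_i \mapsto \sum_i t_i x_i$, it is compact, balanced, and convex. Then the set $D + \varepsilon \overline{B}_X$ is balanced, convex, and closed (the sum of a compact set and a closed set is closed), and it contains $C$; therefore it contains $\overline{\Gamma}(C)$. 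This inclusion is the main step, since it transfers the covering of $C$ (by non-balanced translates of the ball) to a structural containment for the closed balanced convex hull.

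Given a further $\delta > 0$, compactness of $D$ provides $y_1,\ldots,y_k \in X$ with $D \subseteq \bigcup_{j=1}^k B(y_j, \delta)$. Hence
\[
\overline{\Gamma}(C) \subseteq D + \varepsilon \overline{B}_X \subseteq \bigcup_{j=1}^k B(y_j, \varepsilon + \delta),
\]
so $\beta(\overline{\Gamma}(C)) \leq \varepsilon + \delta$. Letting $\delta \to 0$ and then $\varepsilon \to \beta(C)$ yields $\beta(\overline{\Gamma}(C)) \leq \beta(C)$, completing the equality.

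For the ``in particular'' assertion, apply the equality just proved to $C = P(B_X)$, giving $\beta(\overline{\Gamma}(P(B_X))) = \beta(P(B_X))$. Since $P$ is continuous and $\overline{B}_X = \overline{B_X}$, one has $P(\overline{B}_X) \subseteq \overline{P(B_X)}$ (approximate $x$ with $\|x\|=1$ by $(1-1/n)x \in B_X$) and trivially $P(B_X) \subseteq P(\overline{B}_X)$, so $\overline{P(B_X)} = \overline{P(\overline{B}_X)}$. The standard fact $\beta(A) = \beta(\overline{A})$ (a finite union of closed $\varepsilon$-balls is already closed) then gives $\beta(P(B_X)) = \beta(P(\overline{B}_X)) = \widetilde{\gamma}(P)$, as desired. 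The main obstacle is the inclusion $\overline{\Gamma}(C) \subseteq D + \varepsilon \overline{B}_X$; everything else is either a general property of $\beta$ or a continuity/closure argument.
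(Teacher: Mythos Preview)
Your proof is correct and follows the same overall strategy as the paper: from a finite $\varepsilon$-cover of $C$ with centers $x_1,\dots,x_n$, use compactness of the balanced convex hull of those centers to manufacture a finite $(\varepsilon+\delta)$-cover of $\overline\Gamma(C)$. Your execution is in fact cleaner. You observe directly that $D+\varepsilon\overline{B}_X$ (with $D=\overline\Gamma\{x_1,\dots,x_n\}$) is closed, balanced, convex, and contains $C$, hence contains $\overline\Gamma(C)$; then a $\delta$-cover of the compact set $D$ finishes. The paper instead covers only the pairwise sets $C_{i,j}=\{\alpha x_i+\beta x_j:|\alpha|+|\beta|=1\}$ and asserts the final inclusion for $\overline\Gamma(\cup_i(x_i+\varepsilon B_X))$ without indicating why two-term combinations suffice; your use of the full $D$ sidesteps this. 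You also supply the continuity/closure argument identifying $\beta(P(B_X))$ with $\beta(P(\overline{B}_X))=\widetilde\gamma(P)$, which the paper leaves implicit.
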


\begin{proof}  Let $\epsilon,\delta>0$. It suffices to be shown that if $C$ can be covered by finitely many balls of radius $\epsilon$ then, $\overline \Gamma(C)$ can be covered by finitely many balls of radius $\delta+\epsilon$.

Assume that there are $x_1,\ldots,x_N\in X$ such that $C\subset \cup_{i=1}^N x_i+\epsilon B_X$. Take $i,j\in \{1,\ldots,N\}$. Since the set $C_{i,j}:=\{\alpha x_i+\beta x_j:\ |\alpha| +|\beta|= 1\}$ is compact, there are $z_{i,j}^1,\ldots,z_{i,j}^{L_{i,j}}\in X$ such that $C_{i,j}\subset \cup_{k=1}^{L_{i,j}}(z_{i,j}^k+\delta B_X)$.
 If $\tilde x_i\in x_i+\epsilon B_X$, $\tilde x_j\in x_j+\epsilon B_X$ and $k\in \{1,\ldots,L_{i,j}\}$ is such that $\alpha x_i+\beta x_j\in z_{i,j}^k+\delta B_X$ then,
\begin{eqnarray*}
\|\alpha\tilde x_i+\beta \tilde x_j-z_{i,j}^k\|&\leq &|\alpha| \|\tilde x_i-x_i\|+\|\alpha x_i+\beta x_j-z_{i,j}^k\|+|\beta|\|\tilde x_j-x_j\|\\
&\leq & |\alpha | \epsilon+\delta+|\beta|\epsilon=\delta+\epsilon.
\end{eqnarray*}
Hence,

$
\overline \Gamma(\cup_{i=1}^N x_i+\epsilon B_X)\subset \overline{\cup_{i,j=1}^N(\cup_{k=1}^{L_{i,j}}z_{i,j}^k+(\delta +\epsilon) B_X)} \subset \cup_{i,j=1}^N(\cup_{k=1}^{L_{i,j}}z_{i,j}^k+(\delta +\epsilon)\overline{ B_X}).
$
\end{proof}

Consider the
$m$-homogeneous polynomial $J:X\longrightarrow \mathcal{P}(^mX)^*$  given by
$J(x)(B):=B(x)$, $x\in X$, $B\in \mathcal{P}(^mX)$. Since
$P^{**}\circ J=\kappa_Y\circ P$, the following theorem can be proved also with similar techniques to the ones given in \cite[Theorem 2.1]{DLFMMEBS}. However, we will use polynomial techniques related to tensor products to show that the polynomial case admits shorter proofs.

\begin{theorem} \label{t:1.1.1} Let $m\geq2$ and let $X$ and $Y$ be Banach spaces. Then
\begin{enumerate}
\item  $\gamma(P)\leq\widetilde{\gamma}(P^*)$ and $\gamma(P^*)\leq\widetilde{\gamma}(P)$,
\item $\frac{1}{2}\gamma(P)\leq\gamma(P^*)\leq2\gamma(P)$ and $\frac{1}{2}\widetilde{\gamma}(P)\leq\widetilde{\gamma}(P^*)\leq2\widetilde{\gamma}(P)$.
\end{enumerate}
for every $P\in {\mathcal{P}}(^mX;Y)$.
\end{theorem}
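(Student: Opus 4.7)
The plan is to linearize everything in sight and then quote the corresponding linear statement recorded in the Remark preceding Lemma 4.2. Throughout, I will exploit the linearization $P_L\colon \hat\otimes_{m,s}^{\pi_s}X\to Y$ of $P$, whose adjoint $(P_L)^*\colon Y^*\to (\hat\otimes_{m,s}^{\pi_s}X)^*$ is a standard continuous linear operator to which the existing theory applies.

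First I would show that $\gamma(P)=\gamma(P_L)$ and $\tilde\gamma(P)=\tilde\gamma(P_L)$. By the definition of the projective symmetric tensor norm, the closed unit ball of $\hat\otimes_{m,s}^{\pi_s}X$ is the closure of $\Gamma(\{\otimes_m x:x\in \overline B_X\})$. Since $P_L$ is continuous and linear,
$$P(\overline B_X)\subseteq P_L(\overline B_{\hat\otimes_{m,s}^{\pi_s}X})\subseteq \overline{\Gamma(P(\overline B_X))}=\overline\Gamma(P(\overline B_X)).$$
Applying Lemma 4.2 to sandwich the three sets yields $\tilde\gamma(P)=\tilde\gamma(P_L)$, and the analogue for $\alpha$ (whose proof is the same as that of Lemma 4.2, combined with the classical identity $\alpha(\mathrm{conv}(C))=\alpha(C)$ in a Banach space) yields $\gamma(P)=\gamma(P_L)$.

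Next I would identify $P^*$ with $(P_L)^*$ up to an isometry. Unwinding the definitions, for every $\varphi\in Y^*$ and $x\in X$,
$$(I_{^mX,\mathbb K}\circ P^*)(\varphi)(\otimes_m x)=P^*(\varphi)(x)=\varphi(P(x))=\varphi(P_L(\otimes_m x))=(P_L)^*(\varphi)(\otimes_m x),$$
so $I_{^mX,\mathbb K}\circ P^*=(P_L)^*$. Since $I_{^mX,\mathbb K}$ is an isometric isomorphism and $\alpha,\beta$ are invariant under isometries, $\gamma(P^*)=\gamma((P_L)^*)$ and $\tilde\gamma(P^*)=\tilde\gamma((P_L)^*)$. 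Now the Remark before Lemma 4.2 applied to the linear operator $P_L$ gives $\gamma(P_L)\leq\tilde\gamma((P_L)^*)$ and $\gamma((P_L)^*)\leq\tilde\gamma(P_L)$, which combined with the previous identifications is exactly statement (1) of the theorem. Statement (2) is then an immediate consequence of (1) together with the universal estimate $\beta(A)\leq\alpha(A)\leq 2\beta(A)$: for instance, $\gamma(P^*)\leq\tilde\gamma(P)\leq\gamma(P)\leq 2\tilde\gamma(P)$ shows $\gamma(P^*)\leq 2\gamma(P)$, and symmetrically for the other three inequalities.

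The step I expect to be the most delicate is the reduction in Step 1, namely absorbing the gap between $P(\overline B_X)$ and $P_L(\overline B_{\hat\otimes_{m,s}^{\pi_s}X})$; this is where the specific form of the projective symmetric tensor unit ball and Lemma 4.2 do the real work, and where I need the analogue of Lemma 4.2 for $\alpha$. Once that is in place, the rest of the argument is formal, and the gain over the multilinear proof in \cite{DLFMMEBS} is precisely that the linearization $P_L$ collapses the polynomial estimate to a single application of the well-known linear result, rather than requiring a direct construction of covers in $\mathcal P(^mX)^*$.
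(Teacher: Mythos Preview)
Your proposal is correct and follows essentially the same route as the paper: linearize via $P_L$, identify $P^*$ with $(P_L)^*$ through the isometry $I_{^mX,\mathbb K}$, invoke the linear inequalities $\gamma(T)\le\tilde\gamma(T^*)$ and $\gamma(T^*)\le\tilde\gamma(T)$ for $T=P_L$, and then deduce (2) from (1) and $\beta\le\alpha\le 2\beta$. The only minor difference is that you prove the full equalities $\gamma(P)=\gamma(P_L)$ and $\tilde\gamma(P)=\tilde\gamma(P_L)$ (invoking an $\alpha$-analogue of Lemma~4.2), whereas the paper gets by with the bare inclusion $P(\overline B_X)\subseteq P_L(\overline B_{\hat\otimes_{m,s}^{\pi_s}X})$ to obtain $\gamma(P)\le\gamma(P_L)$ and uses Lemma~4.2 only for the $\beta$-identity $\tilde\gamma(P_L)=\tilde\gamma(P)$; this spares you the extra appeal to $\alpha(\overline\Gamma(C))=\alpha(C)$.
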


\begin{proof} (1) Since

${\hspace{2cm}}P_L(\overline B_{\otimes_{m,s}^{\pi_s}X})=P_L(\overline \Gamma(\otimes_{m,s}B_X))=\overline \Gamma(P_L(\otimes_{m,s}B_X))=\overline\Gamma (P(B_X)),$

\noindent we obtain

${\hspace{2cm}}\gamma(P_L)=\alpha(P_L(B_{\otimes_{m,s}^{\pi_s}X}))\geq \alpha(P(B_X))=\gamma(P).$

\noindent Then,

${\hspace{2cm}}\gamma(P)\leq \gamma(P_L)\leq \tilde \gamma(P_L^*)=\tilde \gamma(I_{^mX,{\mathbb K}}\circ P_L^*)=\tilde \gamma(P^*),$

\noindent where the first equality follows from being $I_{^mX,{\mathbb K}}$ an isometric isomorphism.

\noindent Now Lemma \ref{epsilon} gives that
$$
\gamma(P^*)=\gamma(I_{^mX,{\mathbb K}}\circ P_L^*)=\gamma(P_L^*)\leq \tilde \gamma(P_L)=\beta(P_L(\overline B_{\otimes_{m,s}^{\pi_s}X}))=\beta(\overline \Gamma(P(B_X)))=\tilde \gamma(P).
$$

\noindent (2) By using part (1),

$\frac{1}{2}\gamma(P)\leq\frac{1}{2}\widetilde{\gamma}(P^*)=\frac{1}{2}\beta(P^*(\overline{B}_{Y^*}))\leq\frac{1}{2}\alpha(P^*(\overline{B}_{Y^*}))\leq\alpha(P^*(\overline{B}_{Y^*}))=\gamma(P^*)$

${\hspace{3cm}}\leq\widetilde{\gamma}(P)=\beta(P(\overline{B}_X))\leq\alpha(P(\overline{B}_X))\leq2\alpha(P(\overline{B}_X))=2\gamma(P)$,

\noindent and

$\frac{1}{2}\widetilde{\gamma}(P)=\frac{1}{2}\beta(P(\overline{B}_X))\leq\frac{1}{2}\alpha(P(\overline{B}_X))=\frac{1}{2}\gamma(P)\leq\frac{1}{2}\widetilde{\gamma}(P^*)\leq\widetilde{\gamma}(P^*)$

${\hspace{3cm}}=\beta(P^*(\overline{B}_{Y^*}))\leq\alpha(P^*(\overline{B}_{Y^*}))=\gamma(P^*)\leq\widetilde{\gamma}(P)\leq2\widetilde{\gamma}(P)$.
\end{proof}

As a consequence, we get R. Aron and M. Schottenloher result on compactness of polynomials:

\begin{corollary}{\cite[Proposition 3.6]{RMAMS}} Let $m\geq2$ and let $X$ and $Y$ be Banach spaces. Then for every homogeneous polynomial $P\in {\mathcal{P}}(^mX;Y)$ we have that
$P$ is compact if and only if its adjoint $P^*$ is compact.
\end{corollary}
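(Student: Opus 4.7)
The plan is to obtain the corollary as an immediate consequence of Theorem \ref{t:1.1.1}, without any further work. Recall that a bounded set in a Banach space is relatively compact if and only if its Kuratowski measure of non-compactness vanishes, if and only if its Hausdorff measure vanishes (this follows from the inequality $\beta(A)\leq \alpha(A)\leq 2\beta(A)$ recalled just before Theorem \ref{t:1.1.1}). Applied to $A=P(\overline B_X)$ and $A=P^*(\overline B_{Y^*})$, this gives that
\[
P\text{ is compact}\iff \gamma(P)=0\iff \widetilde\gamma(P)=0,
\]
and analogously for the linear operator $P^*$.

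With this observation in hand, the proof is a one-liner using part (2) of Theorem \ref{t:1.1.1}: from
\[
\tfrac{1}{2}\gamma(P)\leq \gamma(P^*)\leq 2\gamma(P),
\]
we read off $\gamma(P)=0\Longleftrightarrow \gamma(P^*)=0$, which by the equivalences above is precisely the statement that $P$ is compact if and only if $P^*$ is compact. Alternatively, one can invoke part (1): if $P$ is compact then $\widetilde\gamma(P)=0$, so $\gamma(P^*)\leq\widetilde\gamma(P)=0$, and hence $P^*$ is compact; conversely, if $P^*$ is compact then $\widetilde\gamma(P^*)=0$, so $\gamma(P)\leq\widetilde\gamma(P^*)=0$, and hence $P$ is compact.

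There is no real obstacle here; the only thing to be careful about is simply to cite the already-established equivalence between compactness and the vanishing of either measure of non-compactness (which in turn rests on the standard $\beta\leq \alpha\leq 2\beta$ estimate, itself recorded in Section 4). Everything else is mechanical substitution into the inequalities of Theorem \ref{t:1.1.1}.
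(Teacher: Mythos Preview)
Your proposal is correct and matches the paper's approach exactly: the corollary is stated immediately after Theorem \ref{t:1.1.1} as a direct consequence, using the already-noted equivalence $P$ compact $\iff \gamma(P)=\widetilde\gamma(P)=0$ together with the inequalities of that theorem. The paper gives no further proof beyond the phrase ``As a consequence,'' so your one-line derivation from part (2) (or equivalently part (1)) is precisely what is intended.
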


The next result generalizes  \cite[Proposition 2]{DEEHOT} (see \cite[Theorem 3.1]{DLFMMEBS} for the multilinear case).

\begin{proposition} \label{este} Let $m\geq2$ and let $X$ and $Y$ be Banach spaces. Then

${\hspace{2cm}}a_n(P)\leq a_n(P^{**})+2\widetilde{\gamma}(P)$

\noindent for all $n\in\mathbb{N}$ and all $P\in\mathcal{P}(^mX;Y)$.
\end{proposition}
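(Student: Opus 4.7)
My plan is to reduce the polynomial inequality to the known linear analogue via the linearization $P_L\colon\hat\otimes_{m,s}^{\pi_s}X\to Y$. Specifically, I will apply the linear result (Proposition~2 of \cite{DEEHOT}, cited just before the statement), which gives
\[
a_n(P_L)\leq a_n((P_L)^{**})+2\widetilde\gamma(P_L),
\]
and then separately identify each of the three quantities appearing in this inequality with the corresponding polynomial quantity.

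The three identifications are as follows. First, $a_n(P)=a_n(P_L)$, which was already observed in the definition of $\tilde a_n$ (because the assignment $P\mapsto P_L$ is an isometric isomorphism between $\mathcal{P}(^mX;Y)$ and $\mathcal L(\hat\otimes_{m,s}^{\pi_s}X;Y)$, transporting finite-rank approximations to finite-rank approximations). Second, $a_n(P^{**})=a_n((P_L)^{**})$: from $I_{^mX,\mathbb K}\circ P^{*}=(P_L)^{*}$ (both send $\varphi\in Y^{*}$ to the functional $\otimes_m x\mapsto \varphi(P(x))$), taking adjoints yields $(P_L)^{**}=P^{**}\circ(I_{^mX,\mathbb K})^{*}$, and since $(I_{^mX,\mathbb K})^{*}$ is an isometric isomorphism, the ideal property of $a_n$ gives the equality of approximation numbers. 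Third, $\widetilde\gamma(P_L)=\widetilde\gamma(P)$: indeed, as in the proof of Theorem~\ref{t:1.1.1},
\[
P_L(\overline B_{\otimes_{m,s}^{\pi_s}X})=P_L\bigl(\overline\Gamma(\otimes_{m,s}B_X)\bigr)=\overline\Gamma(P(B_X)),
\]
and Lemma~\ref{epsilon} gives $\beta(\overline\Gamma(P(B_X)))=\widetilde\gamma(P)$, so $\widetilde\gamma(P_L)=\beta(P_L(\overline B_{\otimes_{m,s}^{\pi_s}X}))=\widetilde\gamma(P)$.

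Putting these three identifications together with the linear inequality applied to $P_L$ immediately yields $a_n(P)\leq a_n(P^{**})+2\widetilde\gamma(P)$. There is no real obstacle in this argument, since all ingredients — the isometry $I_{^mX,\mathbb K}$, the behaviour of $P_L$ on the closed unit ball of the symmetric projective tensor product, and Lemma~\ref{epsilon} — are already in place; the mildest care is needed in step two to verify that the adjoint relation $(P_L)^{**}=P^{**}\circ(I_{^mX,\mathbb K})^{*}$ is correctly set up so that the ideal property of the linear $a_n$ applies cleanly.
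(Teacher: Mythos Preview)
Your proposal is correct and follows essentially the same route as the paper: linearize via $P_L$, apply the linear inequality from \cite{DEEHOT} to $P_L$, and then identify $a_n(P_L)=\tilde a_n(P)$, $a_n((P_L)^{**})=a_n(P^{**})$ (through $(P_L)^{**}=P^{**}\circ I_{^mX,\mathbb K}^{*}$), and $\widetilde\gamma(P_L)=\widetilde\gamma(P)$ (via Lemma~\ref{epsilon}). The paper's proof is the same chain of equalities written in a single line.
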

\begin{proof}
$\tilde a_n(P)=a_n(P_L)\leq a_n(P_L^{**})+2 \gamma(P_L)=a_n((I_{^mX,{\mathbb K}}\circ P^*)^*)+2\tilde \gamma(P)=a_n(P^{**}\circ I_{^mX,{\mathbb K}}^*)+2\tilde \gamma(P)=a_n(P^{**})+2\tilde \gamma(P).$
\end{proof}

\begin{corollary} Let $m\geq2$ and let $X$ and $Y$ be Banach spaces.
\begin{enumerate}
\item If $P\in {\mathcal{P}}(^mX;Y)$ is a compact operator, then $\tilde a_n(P)=a_n(P^*)$ for every $n\in\mathbb{N}$.
\item For every $P\in {\mathcal{P}}(^mX;Y)$ we have that $\tilde a_n(P)\leq 5a_n(P^*)$, $n\in\mathbb{N}$.
\end{enumerate}
\end{corollary}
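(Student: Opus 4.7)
The plan is to derive both parts from Proposition \ref{este}, which bounds $\tilde a_n(P)$ by $a_n(P^{**}) + 2\tilde\gamma(P)$; the task then reduces to controlling each summand by $a_n(P^*)$, using the duality results of Sections 3 and 4.

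For part (1), observe that since $P$ is compact, $\tilde\gamma(P)=0$ and, by the preceding corollary, $P^{*}$ is a compact linear operator. Hutton's classical theorem for compact linear operators then yields $a_n((P^{*})^{*})=a_n(P^{*})$, i.e.\ $a_n(P^{**})=a_n(P^{*})$. Plugging this into Proposition \ref{este} gives $\tilde a_n(P) \leq a_n(P^{*})$, and the reverse inequality is already furnished by Proposition \ref{p:07.07.09}; equality follows.

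For part (2), I would first apply the linear inequality $a_n(T^{*}) \leq a_n(T)$ to $T=P^{*}$ to obtain $a_n(P^{**}) \leq a_n(P^{*})$. Next I would control $\tilde\gamma(P^{*})$: any linear operator $R\colon Y^{*}\to\mathcal{P}(^mX)$ of rank less than $n$ satisfies $\tilde\gamma(R)=0$, since $R(\overline B_{Y^{*}})$ is bounded in a finite-dimensional subspace, hence relatively compact. Using that $\beta$ is subadditive on Minkowski sums together with the trivial bound $\tilde\gamma(P^{*}-R) \leq \|P^{*}-R\|$, one gets $\tilde\gamma(P^{*})\leq \|P^{*}-R\|$; taking the infimum over such $R$ yields $\tilde\gamma(P^{*}) \leq a_n(P^{*})$. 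Theorem \ref{t:1.1.1}(2) then gives $\tilde\gamma(P) \leq 2\tilde\gamma(P^{*}) \leq 2\,a_n(P^{*})$. Assembling these estimates in Proposition \ref{este}:
\[
\tilde a_n(P) \;\leq\; a_n(P^{**}) + 2\tilde\gamma(P) \;\leq\; a_n(P^{*}) + 4\,a_n(P^{*}) \;=\; 5\,a_n(P^{*}).
\]

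The main obstacle, such as it is, lies in the constant $5$: it is exactly the combined cost of the factor $2$ in the Hausdorff-measure duality of Theorem \ref{t:1.1.1}(2) and the factor $2$ in Proposition \ref{este}. Any sharpening of either inequality would propagate to a smaller constant here; conversely, the structure of the argument makes it unlikely that $5$ can be improved without first refining one of those two ingredients. For part (1), the only non-obvious step is the appeal to Hutton's theorem to pass from $a_n(P^{*})$ to $a_n(P^{**})$, which is legitimate precisely because compactness of $P$ propagates to $P^{*}$ via the corollary just proved.
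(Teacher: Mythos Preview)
Your proof is correct in both parts.

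For part (1) your argument coincides with the paper's, except that you invoke Hutton's theorem (equality $a_n(P^{**})=a_n(P^{*})$ for compact $P^{*}$) where the paper simply cites the general inequality $a_n(T^{*})\leq a_n(T)$ with $T=P^{*}$; the latter is marginally more economical since it does not require first passing compactness to $P^{*}$, but the two routes are essentially the same.

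For part (2) you take a genuinely different path. The paper reduces immediately to the linear case via linearization,
\[
\tilde a_n(P)=a_n(P_L)\leq 5\,a_n(P_L^{*})=5\,a_n(I_{^mX,\mathbb K}\circ P^{*})=5\,a_n(P^{*}),
\]
quoting the known linear inequality $a_n(T)\leq 5\,a_n(T^{*})$ as a black box. You instead stay at the polynomial level and assemble the bound from the paper's own Proposition~\ref{este} and Theorem~\ref{t:1.1.1}(2), together with the elementary estimate $\tilde\gamma(P^{*})\leq a_n(P^{*})$. Your route is more self-contained within the paper's framework and makes transparent where the constant $5=1+2\cdot 2$ comes from; the paper's route is shorter but hides exactly that decomposition inside the cited linear result. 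In fact the classical proof of the linear $5$-inequality proceeds by precisely the mechanism you wrote out, so the two arguments are, at bottom, the same computation carried out at different levels of abstraction.
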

\begin{proof} (1) Since $P^{*}$ is a linear continuous operator between Banach spaces, we have $a_n(P^{**})\leq a_n(P^*)$ (see, e.g., \cite[p. 152, 11.7.3. Proposition]{AP2}). If $P$ is compact, then $\widetilde{\gamma}(P)=0$ and hence, by Theorem \ref{este} and Proposition \ref{p:07.07.09}, we get

${\hspace{2cm}}a_n(P)\leq a_n(P^{**})+2\widetilde{\gamma}(P)=a_n(P^{**})\leq a_n(P^*)\leq \tilde{a}_n(P)$.

\noindent (2)  $\tilde a_n(P)=a_n(P_L)\leq 5a_n(P_L^*)=5a_n(I_{^mX,{\mathbb K}}\circ P^*)=5a_n(P^*)$.
\end{proof}

An alternative proof follows from the well known fact that $P$ is compact if and only if $P_L$ is compact (see \cite{RR2}) and the corresponding property for linear operators, that is, $\tilde a_n(P)=a_n(P_L)=a_n(P_L^*)=a_n(I\circ P^*)=a_n(P^*)$, for all $n\in \mathbb N$.

Let $P\in\mathcal{P}(^mX;Y)$.
The quantity $\displaystyle \tilde a(P):=\lim_{n\rightarrow\infty}\tilde a_n(P)\geq 0$ does not help when trying to measure the compactness of $P$. Even if $P$ is approximable (and so compact) whenever $\tilde a(P)=0$ the converse is, in general, not true.
 If we consider the approximation property (shortly, AP) on $Y$, then
any compact $m$-homogeneous polynomial $P\in\mathcal{P}(^mX;Y)$ can be approximated by finite-rank $m$-homogeneous polynomials (see \cite[Proposition 2.5]{GBLP}). Hence, similarly to the (multi)linear case, if the space $Y$ has the AP, then $P\in\mathcal{P}(^mX;Y)$ is compact if and only if $\tilde a(P)=0$. However,  by \cite[Proposition 3.3]{RMAMS} (see also \cite[Theorem 4.3]{JM2}) we know that $\mathcal{P}(^mX)$ has the AP if and only if, for every Banach space $Y$, the space of all finite-rank polynomials $\mathcal{P}_f(^mX;Y)$ is norm-dense in the space of all compact polynomials $\mathcal{P}_k(^mX;Y)$, or equivalently, any compact $m$-homogeneous polynomial $P\in\mathcal{P}(^mX;Y)$ can be approximated arbitrarily and closely by finite-rank $m$-homogeneous polynomials. Therefore if the space $\mathcal{P}(^mX)$ has the AP, then $P\in\mathcal{P}(^mX;Y)$ is compact if and only if $\tilde a(P)=0$. Let us remark that, there is a reflexive separable Banach space $X$ with basis such that $\mathcal{P}(^2X)$ does not have the AP (see \cite{RMAMS}). Hence, for this space $X$, which has the AP, there is a Banach space $Y$ such that there is a compact polynomial $P:X\longrightarrow Y$ which cannot be approximated by finite-rank polynomials. Note that it turns out that this space $Y$ also cannot have the AP by \cite[Proposition 2.5]{GBLP}.

As in the (multi-)linear case, we use  Kolmogorov numbers to measure how far a polynomial is from being compact.

We define the $n$-th Kolmogorov number $\tilde d_n(P)$ of a polynomial $P\in\mathcal{P}(^mX;Y)$ by

${\hspace{1cm}}\tilde d_n(P):=\inf\{\varepsilon>0: \ P(\overline{B}_X)\subset N_\varepsilon+\varepsilon\overline{B}_Y, \ N_\varepsilon\subset Y, \ dim(N_\varepsilon)<n\}$.

\noindent For $P:=T\in {\mathcal L}(X;Y)$ we write $d_n(T):=\widetilde d_n(P)$.

Kolmogorov numbers are related to approximation numbers via the equality $d_n(T)=a_n(TQ)$ , $n\in\mathbb{N}$, $T\in {\mathcal L}(X;Y)$. Recall that $Q$ is the canonical metric surjection from $l_1(\overline{B}_X)$ onto $X$, defined by

${\hspace{3.5cm}}\displaystyle Q(\{\lambda_x\})=\sum_{x\in\overline{B}_X}\lambda_xx$,  \ \ \ \ $\{\lambda_x\}\in l_1(\overline{B}_X)$

\noindent (see \cite[p. 150-151]{AP2}, and for the multilinear case see  \cite[Theorem 4.1]{DLFMMEBS}.) To get the polynomial version of this result we will use the next proposition  and the  study of the  lifting property for polynomials by Gonz\'alez and Guti\'errez \cite{MGJG}.

\begin{proposition}\label{dn}
Given $P\in {\mathcal P}(^mX;Y)$, $\tilde d_n(P)=d_n(P_L)$.
\end{proposition}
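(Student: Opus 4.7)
The plan is to establish both inequalities $\tilde d_n(P)\leq d_n(P_L)$ and $d_n(P_L)\leq \tilde d_n(P)$, using the crucial identity
$$P_L(\overline B_{\otimes_{m,s}^{\pi_s}X}) = P_L(\overline\Gamma(\otimes_{m,s}B_X))=\overline\Gamma(P(B_X)),$$
which was already exploited in the proof of Theorem \ref{t:1.1.1}. So the task reduces to comparing how finite-codimensional $\varepsilon$-neighborhoods cover $P(\overline B_X)$ versus $\overline\Gamma(P(B_X))$.

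For the easy direction $\tilde d_n(P)\leq d_n(P_L)$, I would note that continuity of $P$ together with the density of $B_X$ in $\overline B_X$ (using $x_n:=(1-1/n)x$) yields $P(\overline B_X)\subset \overline\Gamma(P(B_X))=P_L(\overline B_{\otimes_{m,s}^{\pi_s}X})$. Hence any admissible pair $(N_\varepsilon,\varepsilon)$ for $P_L$ automatically covers $P(\overline B_X)$, giving $\tilde d_n(P)\leq d_n(P_L)$.

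For the reverse direction, the key observation I would establish is that whenever $N$ is a finite-dimensional subspace of $Y$, the set $N+\varepsilon\overline B_Y$ is closed, balanced and convex. Balancedness and convexity are immediate since both summands share these properties. Closedness is the one nonroutine point: if $z_k+b_k\to y$ with $z_k\in N$ and $\|b_k\|\leq\varepsilon$, then $(z_k)$ is bounded in the finite-dimensional space $N$, so a subsequence converges to some $z\in N$; the corresponding $b_k$ then converge to $y-z$ with $\|y-z\|\leq\varepsilon$, proving $y\in N+\varepsilon\overline B_Y$. Granted this, any inclusion $P(\overline B_X)\subset N_\varepsilon+\varepsilon\overline B_Y$ forces $\overline\Gamma(P(B_X))\subset N_\varepsilon+\varepsilon\overline B_Y$ (the right-hand side is closed, balanced and convex and contains $P(B_X)$), which is precisely $P_L(\overline B_{\otimes_{m,s}^{\pi_s}X})\subset N_\varepsilon+\varepsilon\overline B_Y$. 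Taking infima gives $d_n(P_L)\leq \tilde d_n(P)$.

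The main obstacle is the closedness of $N+\varepsilon\overline B_Y$; without this, one would have to allow a small loss $\varepsilon+\delta$ (as in Lemma \ref{epsilon}) and then pass to the infimum, which would still work but is less clean. Once that point is handled, the rest is bookkeeping using the identification of $\overline B_{\otimes_{m,s}^{\pi_s}X}$ with $\overline\Gamma(\otimes_{m,s}B_X)$.
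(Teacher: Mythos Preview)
Your proof is correct and follows the same strategy as the paper: both directions rest on the identity $P_L(\overline B_{\otimes_{m,s}^{\pi_s}X})=\overline\Gamma(P(\overline B_X))$. The only technical difference is in the reverse inequality: you prove directly that $N_\varepsilon+\varepsilon\overline B_Y$ is closed (via a compactness argument in the finite-dimensional $N_\varepsilon$), whereas the paper sidesteps this by using exactly the $\varepsilon+\delta$ slack you anticipated, writing $\overline{N_\varepsilon+\varepsilon\overline B_Y}\subset N_\varepsilon+(\varepsilon+\delta)\overline B_Y$ and then letting $\delta\to 0$.
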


\begin{proof}
Clearly $\tilde d_n(P)\leq d_n(P_L)$. On the other hand, if $P(\overline B_X)\subset N_\epsilon +\epsilon \overline B_Y$ then $P_L(\overline B_{\otimes_{m,s}^{\pi_s}X})=\overline \Gamma (P(\overline B_X))\subset \overline \Gamma (N_\epsilon+\epsilon \overline B_Y)=\overline{N_\epsilon+\epsilon \overline B_Y}\subset N_\epsilon+(\epsilon+\delta)\overline B_Y$ for all $\delta>0$. Hence, $d_n(P_L)\leq \epsilon+\delta$ for all $\delta>0$ and so, $d_n(P_L)\leq \tilde d_n(P)$.
\end{proof}

\noindent As in the linear case, $P\in\mathcal{P}(^mX;Y)$ is compact if and only if $\displaystyle \tilde d(P):=\lim_{n\rightarrow\infty}\tilde d_n(P)=0$. Also it is obvious that $\tilde d_n(P)=0$ whenever $rank(P)<n$. Propositions \ref{I} and \ref{dn} imply that $\tilde d_n=d_n\circ I$ forms an $s$-number sequence.
 Proposition \ref{p:090.11.0} implies that $\tilde d_n(P)\leq \tilde a_n(P)$, for every $n\in \mathbb{N}$ and Proposition \ref{injsur} gives that
$(\tilde d_n)$ is a surjective $s$-number sequence.

Let $m\in\mathbb{N}$ and let $X$ be a Banach space. We say that $X$ has the {\it polynomial $m$-lifting property} if, for every continuous $m$-homogenous polynomial $P$ from $X$ to any quotient space $Y/N$, there is $\widetilde{P}\in\mathcal{P}(^mX;Y)$ such that $P=Q^Y_N\widetilde{P}$, where $Q^Y_N$ denotes the canonical map of $Y$ onto the quotient space $Y/N$.
We say that $X$ has the {\it polynomial metric $m$-lifting property} if, for every $\varepsilon>0$ and every continuous $m$-homogenous polynomial $P$ from $X$ to any quotient space $Y/N$, there is $\widetilde{P}\in\mathcal{P}(^mX;Y)$ such that $P=Q^Y_N\widetilde{P}$ and $\|\widetilde{P}\|\leq(1+\varepsilon)\|P\|$.

\begin{proposition}\label{lifting}
Let $m\in\mathbb{N}$. A Banach space $X$ has the polynomial (metric) $m$-lifting property if, and only if, $\hat\otimes_{m,s}^{\pi_s}X$ has the (respectively, metric) lifting property.
\end{proposition}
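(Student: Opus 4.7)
The plan is to use the isometric linearization isomorphism $I_{^mX,Y}: \mathcal{P}(^mX;Y) \to \mathcal{L}(\hat\otimes_{m,s}^{\pi_s}X;Y)$, $P \mapsto P_L$, to translate polynomial liftings into linear liftings and back. The central observation I would establish first is the naturality identity $(Q^Y_N \tilde P)_L = Q^Y_N \tilde P_L$ for every $\tilde P \in \mathcal{P}(^mX;Y)$. This is immediate from the uniqueness clause in the universal property of $\hat\otimes_{m,s}^{\pi_s}X$: both sides are continuous linear maps sending $\otimes_m x$ to $Q^Y_N(\tilde P(x))$, hence they agree.

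For the forward implication, I would assume that $X$ has the polynomial (metric) $m$-lifting property and take an arbitrary $T \in \mathcal{L}(\hat\otimes_{m,s}^{\pi_s}X; Y/N)$. Define $P \in \mathcal{P}(^mX;Y/N)$ by $P(x) := T(\otimes_m x)$; then $P_L = T$ and $\|P\| = \|T\|$ by the linearization isometry. The polynomial lifting hypothesis provides $\tilde P \in \mathcal{P}(^mX;Y)$ with $Q^Y_N \tilde P = P$ (and $\|\tilde P\| \leq (1+\varepsilon)\|P\|$ in the metric version). Applying the naturality identity yields $Q^Y_N \tilde P_L = (Q^Y_N \tilde P)_L = P_L = T$, so $\tilde P_L$ is the required linear lifting, with $\|\tilde P_L\| = \|\tilde P\| \leq (1+\varepsilon)\|T\|$.

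For the converse, assuming $\hat\otimes_{m,s}^{\pi_s}X$ has the linear (metric) lifting property and given $P \in \mathcal{P}(^mX;Y/N)$, I would apply the linear lifting property to $P_L$ to obtain $S \in \mathcal{L}(\hat\otimes_{m,s}^{\pi_s}X;Y)$ with $Q^Y_N S = P_L$ and $\|S\| \leq (1+\varepsilon)\|P_L\|$. Setting $\tilde P(x) := S(\otimes_m x)$ defines an element of $\mathcal{P}(^mX;Y)$ whose linearization is exactly $S$. The naturality identity then gives $(Q^Y_N \tilde P)_L = Q^Y_N S = P_L$, and injectivity of the linearization map forces $Q^Y_N \tilde P = P$. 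The bound $\|\tilde P\| = \|S\| \leq (1+\varepsilon)\|P\|$ follows from the isometry.

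The main obstacle is the naturality identity $(Q^Y_N \tilde P)_L = Q^Y_N \tilde P_L$; once this is in place, everything else is a routine translation between the polynomial and linear settings via the isometric isomorphism $I_{^mX,Y}$, and the same argument handles both the plain and the metric versions without modification.
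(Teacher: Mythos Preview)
Your proof is correct and follows essentially the same approach as the paper: both directions pass between $P$ and $P_L$ via the linearization isometry, and the key step in each is precisely your naturality identity $(Q^Y_N\tilde P)_L=Q^Y_N\tilde P_L$, which the paper verifies inline by composing with the canonical polynomial $\delta_X(x)=\otimes_m x$ rather than stating it as a separate lemma. The metric case is handled identically in both via $\|P\|=\|P_L\|$.
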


\begin{proof}
Assume first that $X$ has the polynomial $m$-lifting property. Let $T$ be a continuous linear operator from $\hat\otimes_{m,s}^{\pi_s}X$ into some quotient space $Y/N$. Let $P\in {\mathcal P}(^mX;Y/N)$ be such that $P_L=T$. By assumption, there is $\tilde P\in {\mathcal P}(^mX;Y)$ such that $Q^Y_N\circ \tilde P=P$. Since $Q^Y_N\circ (\tilde P)_L\circ \delta_X=P$, where $\delta_X$ is the $m$-homogeneous polynomial from $X$ to $\hat\otimes_{m,s}^{\pi_s}X$ given by $\delta_X(x)=x\otimes\cdots\otimes x$ (see \cite{RR2}), then $Q^Y_N\circ (\tilde P)_L=P_L=T$ and $\hat\otimes_{m,s}^{\pi_s}X$ has the lifting property.

We now assume  that $\hat\otimes_{m,s}^{\pi_s}X$ has the lifting property. Let $P\in {\mathcal P}(^mX;Y/N)$. Then $P_L\in {\mathcal L}(\hat\otimes_{m,s}^{\pi_s}X;Y/N)$. By assumption, there is $\widetilde{P_L}\in {\mathcal L}(\hat\otimes_{m,s}^{\pi_s}X;Y)$ such that $Q^Y_N\circ \widetilde{P_L}=P_L$. Then $\widetilde P:=\widetilde{P_L}\circ \delta_X$ satisfies $P=Q^Y_N\circ \widetilde P$.

The metric case follows from the fact that $\|P\|=\|P_L\|$.
\end{proof}

As a consequence, if $X$ has the polynomial metric $m$-lifting property, then for every $P\in\mathcal{P}(^mX;Y)$ we have
$\tilde d_n(P)=d_n(P_L)=a_n(P_L)=\tilde a_n(P),$ for all $n\in\mathbb{N}$.

\begin{theorem} \label{t:1.1.2} Let $m\geq2$ and let $X$ and $Y$ be Banach spaces. Let $P\in\mathcal{P}(^mX;Y)$, and let $Q$ be the canonical metric surjection from $l_1(\overline{B}_X)$ onto $X$. Then we have that

${\hspace{3.5cm}}\displaystyle \tilde d_n(P)=\tilde a_n(PQ)$ , $n\in\mathbb{N}$.
\end{theorem}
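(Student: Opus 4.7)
The plan is to combine the surjectivity of $(\tilde d_n)$ with the polynomial metric $m$-lifting property of $\ell_1(\overline B_X)$.  One direction is essentially automatic: since $Q$ is a metric surjection and $(\tilde d_n)$ is a surjective $s$-number sequence (as remarked immediately before the statement), we have $\tilde d_n(P) = \tilde d_n(PQ)$, and combining with the inequality $\tilde d_n(PQ) \leq \tilde a_n(PQ)$ of Proposition \ref{p:090.11.0} yields the easy half $\tilde d_n(P) \leq \tilde a_n(PQ)$.

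For the reverse inequality, I would proceed by hand along the lines of the classical linear argument.  Given $\epsilon > \tilde d_n(P)$, pick a subspace $N \subset Y$ with $\dim N < n$ and $P(\overline B_X) \subset N + \epsilon \overline B_Y$, and let $\pi \colon Y \to Y/N$ be the canonical quotient map.  Then $\|\pi P\| \leq \epsilon$, and hence also $\|\pi P Q\| \leq \epsilon$.  Assuming that $\ell_1(\overline B_X)$ has the polynomial metric $m$-lifting property, for each $\delta > 0$ we may lift $\pi P Q$ through $\pi$ to obtain $R \in \mathcal{P}(^m \ell_1(\overline B_X); Y)$ with $\pi R = \pi P Q$ and $\|R\| \leq (1+\delta)\epsilon$.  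The polynomial $S := PQ - R$ then satisfies $S(\ell_1(\overline B_X)) \subset \ker \pi = N$, so $rank(S) \leq \dim N < n$, while $\|PQ - S\| = \|R\| \leq (1+\delta)\epsilon$.  Letting $\delta \to 0^+$ and then $\epsilon \to \tilde d_n(P)^+$ gives $\tilde a_n(PQ) \leq \tilde d_n(P)$, which completes the argument.

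The main obstacle is therefore the justification of the polynomial metric $m$-lifting property of $\ell_1(\overline B_X)$.  By Proposition \ref{lifting} this reduces to the classical metric lifting property of $\hat\otimes_{m,s}^{\pi_s}\ell_1(\overline B_X)$.  The natural way to establish it is to identify this symmetric tensor power with an $L_1$-type space --- exploiting that $\hat\otimes^m_\pi \ell_1(I) \cong \ell_1(I^m)$ and that the canonical symmetrization is a norm-one projection onto $\hat\otimes_{m,s}^{\pi_s}\ell_1(I)$ --- since $L_1$-spaces enjoy the metric lifting property; alternatively one can appeal to the study of polynomial lifting by Gonz\'alez and Guti\'errez \cite{MGJG} cited earlier in the paper.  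Once this point is in place, one may also shortcut the second paragraph by invoking the remark after Proposition \ref{lifting}, which gives $\tilde a_n(PQ) = \tilde d_n(PQ)$ directly, and chaining with $\tilde d_n(PQ) = \tilde d_n(P)$ from surjectivity.
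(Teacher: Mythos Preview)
Your proposal is correct and follows essentially the same approach as the paper: both hinge on the polynomial metric $m$-lifting property of $\ell_1(\overline{B}_X)$ (which the paper takes directly from \cite[Theorem 1]{MGJG}) together with the surjectivity of $(\tilde d_n)$, and your final shortcut $\tilde d_n(P)=\tilde d_n(PQ)=\tilde a_n(PQ)$ is exactly the paper's chain, which it writes at the level of the linearizations as $\tilde d_n(P)=d_n(P_L)=d_n(P_L\circ\otimes_m Q)=a_n(P_L\circ\otimes_m Q)=\tilde a_n(PQ)$. The only difference is cosmetic: the paper passes through $P_L$ and the linear metric lifting property of $\hat\otimes_{m,s}^{\pi_s}\ell_1(\overline{B}_X)$ (via Proposition~\ref{lifting}) rather than using the remark after Proposition~\ref{lifting} at the polynomial level, and it omits your direct by-hand lifting argument of the second paragraph.
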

\begin{proof} By \cite[Theorem 1]{MGJG}  $l_1(\overline{B}_X)$ has the polynomial metric lifting property and so, from  Proposition \ref{lifting} $\hat\otimes_{m,s}^{\pi_s}l_1(\overline{B}_X)$ has the  metric lifting property. Then, $d_n(P_L\circ \otimes_m Q)=a_n(P_L\circ\otimes_mQ)$. Using that $(d_n)$ is surjective we get

$\tilde d_n(P)=d_n(P_L)=d_n(P_L\circ\otimes_mQ)=a_n(P_L\circ\otimes_mQ)=a_n((P\circ Q)_L)=\tilde a_n(P\circ Q).$
\end{proof}

Note that $(\tilde d_n)$ is the largest surjective $s$-number sequence. Indeed, given any surjective $s$-number sequence $(s_n)$, then for the canonical metric surjection $Q\in {\mathcal L}(l_1(\overline{B}_X);X)$ we have, by Theorem \ref{t:1.1.2}, that for any $P\in\mathcal{P}(^mX;Y)$

${\hspace{2.5cm}}\displaystyle s_n(P)=s_n(PQ)\leq \tilde a_n(PQ)=\tilde d_n(P)$.

By Propositions \ref{injsur} and \ref{dn}, $(\tilde d_n)$ is multiplicative. As a consequence for any surjective and multiplicative $s$-number sequence the following estimate holds for all $S\in {\mathcal L}(Y;Z)$ and all $P\in \mathcal{P}(^mX;Y)$:

${\hspace{3cm}}s_{k+n-1}(SP)\leq s_k(S)\tilde d_n(P)$, \ \ \ $k,n\in\mathbb{N}$,

\noindent In fact, Theorem \ref{t:1.1.2} yields

${\hspace{0.5cm}}s_{k+n-1}(SP)=s_{k+n-1}(SPQ)\leq s_k(S)s_n(PQ)\leq s_k(S)\tilde a_n(PQ)=s_k(S)\tilde d_n(P)$.\\

We end the paper with another example of s-number of homogeneous polynomials, namely, Gelfand numbers, from which we will get alternative characterizations of compactness of homogeneous polynomials.
Motivated by  \cite[ 11.5.1. Proposition]{AP2}  we define the Gelfand numbers  $\tilde c_n(P)$  of an $m$-homogeneous polynomial $P\in\mathcal{P}(^mX;Y)$  by

${\hspace{4cm}}\tilde c_n(P):=\tilde a_n(\kappa_YP).$

Clearly $(\tilde c_n)$ is an $s$-number sequence since $(\tilde a_n)$ is an $s$-number sequence, and for each $n\in\mathbb{N}$ we have that $\tilde c_n(P)\leq \tilde a_n(P)$. We will just write $c_n(T):=\tilde c_n(P)$ whenever $P=T\in {\mathcal L}(X;Y)$.
Note that $\tilde c_n(P)=c_n(P_L)$ for any  $P\in\mathcal{P}(^mX;Y)$. It follows from Proposition \ref{injsur} that $(\tilde c_n)$ is the largest injective $s$-number sequence, and satisfies the multiplicavity property (M). We also have a polynomial version of Carl's mixing multiplicavity of an injective $s$-number sequence $(s_n)$, that is, for all $S\in{\mathcal L}(Y;Z)$ and $P\in\mathcal{P}(^mX;Y)$, using Proposition \ref{p:090.11.0} we get that
$s_{k+n-1}(SP)\leq c_k(S)s_n(P)$, $k,n\in\mathbb{N}$.

Considering the function $\tilde c:\mathcal{P}(^mX;Y)\longrightarrow[0,\infty)$ given by
$\displaystyle \tilde c(P):=\lim_{n\longrightarrow\infty}\tilde c_n(P)$,
 we have that $\tilde c(P)=c(P_L)$. Now, compactness of homogeneous polynomials can be quantified by means  $\tilde c$ and $c$ as follows.

\begin{proposition} Let $m\geq2$ and let $X$ and $Y$ be Banach spaces. The following statements for a polynomial $P\in\mathcal{P}(^mX;Y)$ are equivalent.

(i) $P$ is compact.

(ii) $\tilde c(P)=0$.

(iii) $c(P^*)=0$.

\end{proposition}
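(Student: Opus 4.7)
The strategy is pure reduction to the linear theory via the linearization $P_L$ and the Aron--Schottenloher adjoint $P^*$, both already deployed in the paper. The single external input I will invoke is the classical linear characterization from Pietsch's book \cite{AP2}: a continuous linear operator $T$ between Banach spaces is compact if and only if $c(T)=0$.

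For $(i)\Leftrightarrow(ii)$ I will use the identity $\tilde c(P)=c(P_L)$, recorded just before the statement, together with Ryan's theorem \cite{RR2} that $P$ is compact if and only if its linearization $P_L$ is compact. Applying the linear characterization to $T=P_L$ then converts compactness of $P_L$ into $c(P_L)=0$, i.e., $\tilde c(P)=0$.

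For $(i)\Leftrightarrow(iii)$ I will invoke the Aron--Schottenloher corollary to Theorem \ref{t:1.1.1} (that $P$ is compact iff $P^*$ is compact), and then apply the linear characterization directly to the continuous linear operator $P^*\colon Y^*\to\mathcal{P}(^mX)$: compactness of $P^*$ is equivalent to $c(P^*)=0$. Composing the two equivalences yields $(i)\Leftrightarrow(iii)$.

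No serious obstacle is present: the whole argument is a verbatim transfer of the standard linear characterization through the isometric isomorphism $P\leftrightarrow P_L$ and the adjoint construction. The only sanity check worth flagging is that the pointwise identity $\tilde c_n(P)=c_n(P_L)$ passes to the limit, which is immediate, so no estimate of the speed of convergence is required.
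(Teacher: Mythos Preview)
Your proof is correct and follows essentially the same route as the paper: reduce $(i)\Leftrightarrow(ii)$ to the linear case via $\tilde c(P)=c(P_L)$ and Ryan's result that $P$ is compact iff $P_L$ is, and reduce $(i)\Leftrightarrow(iii)$ via the Aron--Schottenloher equivalence $P$ compact iff $P^*$ compact, then invoke the classical linear characterization $T$ compact iff $c(T)=0$. The paper cites \cite[2.4.11]{AP4} for this last fact rather than \cite{AP2}, but the argument is otherwise identical.
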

\begin{proof} We know that $P$ is compact if and only if $P_L$ is compact (see \cite{RR2}), and $P$ is compact if and only if $P^*$ is compact (see \cite{RMAMS} or Corollary 4.4). Combining these facts with \cite[2.4.11]{AP4} we get the implications $(i)\Longleftrightarrow(ii)$, and $(i)\Longleftrightarrow(iii)$.
\end{proof}

Finally, following the lines of proof of \cite[Theorem 5.1]{DLFMMEBS} we get the following result, whose proof is omitted, which gives relation between Gelfand and Kolmogorov numbers of polynomials.

\begin{theorem} \label{t:1.1.3} Let $m\geq2$ and let $X$ and $Y$ be Banach spaces. Then, for every polynomial $P\in\mathcal{P}(^mX;Y)$ and  $n\in\mathbb{N}$ we have that

(i) $\displaystyle c_n(P^*)\leq \tilde d_n(P)$,

(ii) $\tilde c_n(P)=d_n(P^*)$,

(iii) $\tilde c_n(P)\leq2\sqrt{n} \ c_n(P^*)$.
\end{theorem}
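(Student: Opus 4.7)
The plan is to reduce each of the three inequalities to its linear counterpart, using the by-now familiar linearization identifications already established in the paper. The key fact to exploit is that $P^* = I_{^mX,\mathbb K}\circ P_L^*$ with $I_{^mX,\mathbb K}$ an isometric isomorphism, so every $s$-number evaluated on $P^*$ equals the corresponding $s$-number evaluated on $P_L^*$ (this follows from the ideal property applied twice, or equivalently from the fact that composition with an isometric isomorphism preserves $s$-numbers for linear operators, cf.\ \cite[p.~152, 11.7.3]{AP2}). Combined with $\tilde c_n(P)=c_n(P_L)$ and $\tilde d_n(P)=d_n(P_L)$ (Proposition \ref{dn} and the definition of $\tilde c_n$), the entire statement is a rewriting of the linear version \cite[Theorem 5.1]{DLFMMEBS}.

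Concretely, for part (i) I would write
\[
c_n(P^*)=c_n(I_{^mX,\mathbb K}\circ P_L^*)=c_n(P_L^*)\leq d_n(P_L)=\tilde d_n(P),
\]
where the middle inequality is the linear Gelfand--Kolmogorov duality $c_n(T^*)\leq d_n(T)$. For part (ii) the same identifications give
\[
\tilde c_n(P)=c_n(P_L)=d_n(P_L^*)=d_n(I_{^mX,\mathbb K}\circ P_L^*)=d_n(P^*),
\]
the second equality being the linear identity $c_n(T)=d_n(T^*)$. For part (iii) we chain
\[
\tilde c_n(P)=c_n(P_L)\leq 2\sqrt n\,c_n(P_L^*)=2\sqrt n\,c_n(P^*),
\]
applying the linear mixing estimate $c_n(T)\leq 2\sqrt n\,c_n(T^*)$ to $T=P_L$.

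The only step that needs any care is the passage $c_n(P^*)=c_n(P_L^*)$ and $d_n(P^*)=d_n(P_L^*)$, since $P^*$ takes values in $\mathcal P(^mX)$ whereas $P_L^*$ takes values in $(\hat\otimes_{m,s}^{\pi_s}X)^*$; one has to invoke that $I_{^mX,\mathbb K}$ is an isometric isomorphism and that composition with such an isomorphism preserves $a_n$, $c_n$ and $d_n$. Because the paper suppresses this proof, the main obstacle is not mathematical but expository: ensuring the reader sees how the linear theorem transfers through the isometry. Once that is granted, each of (i), (ii), (iii) is a one-line calculation and no new ingredient is required.
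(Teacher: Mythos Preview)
Your argument is correct. One small notational slip: the identity established earlier in the paper is $P_L^* = I_{^mX,\mathbb K}\circ P^*$ (see the proof of Proposition~\ref{p:07.07.09}), not $P^* = I_{^mX,\mathbb K}\circ P_L^*$ as you write; but since $I_{^mX,\mathbb K}$ is an isometric isomorphism this has no effect on the chain of equalities.

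Your route differs from what the paper indicates. The paper omits the proof entirely and points to \cite[Theorem~5.1]{DLFMMEBS} as the template, meaning one should imitate the \emph{multilinear} argument directly in the polynomial setting. You instead linearize and invoke the purely linear versions of the three relations applied to $T=P_L$, then translate back via $\tilde c_n(P)=c_n(P_L)$, $\tilde d_n(P)=d_n(P_L)$ and $s_n(P^*)=s_n(P_L^*)$. This is shorter and, in fact, more consistent with the paper's own methodology in Sections~3 and~4 (cf.\ Remark~\ref{remark}, where exactly this kind of shortcut is advertised). The trade-off is that your proof leans on the linear duality theory as a black box, whereas a direct adaptation of \cite{DLFMMEBS} would be more self-contained. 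Also note that the reference you cite for the ``linear version'' is actually the multilinear paper; the underlying linear inequalities go back to Pietsch (e.g.\ \cite[11.7]{AP2} and \cite{AP4}), so it would be cleaner to cite those.
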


\bigskip
{\bf Acknowledgement:}
The authors are deeply indebted to R. Aron, who proposed the research program on $s$-numbers for homogeneous polynomials and helped unselfishly to improve the paper.

\end{document}